\newtheorem*{conj*}{Conjecture}
\newtheorem{theorem}{Theorem}[section]
\newtheorem{corollary}[theorem]{Corollary}
\theoremstyle{remark}
\newtheorem*{example}{Example}
\newtheorem*{rmk}{Remark}
\newcommand{\Z}{\mathbb{Z}}
\newcommand{\Q}{\mathbb{Q}}
\newcommand{\R}{\mathbb{R}}
\newcommand{\N}{\mathbb{N}}
\newcommand{\SL}{\operatorname{SL}}
\newcommand{\C}{\mathbb{C}}
\newcommand{\re}{\text {\rm Re}}
\newcommand{\im}{{\text {\rm Im}}}
\newcommand{\Hol}{\text {\rm Hol}}
\newcommand{\Log}{\operatorname{Log}}
\newcommand{\Arg}{\operatorname{Arg}}
\numberwithin{equation}{section}
\begin{document}
\title[$q$-brackets of $t$-hook functions]{Eichler integrals of Eisenstein series as $q$-brackets of weighted $t$-hook functions on partitions}
\author{Kathrin Bringmann}
\address{Department of Mathematics and Computer Science, Division of Mathematics, University of Cologne,
	Weyertal 86-90, 50931 Cologne, Germany}
\email{kbringma@math.uni-koeln.de}

\author{Ken Ono}
\address{Department of Mathematics, University of Virginia, Charlottesville, VA 22904}
\email{ko5wk@virginia.edu}

\author{Ian Wagner}
\address{Department of Mathematics, Vanderbilt University, Nashville, TN 37212}
\email{ian.c.wagner@vanderbilt.edu}

\dedicatory{In fond memory of Dick Askey}

\thanks{The research of the first author is supported by the Alfried Krupp Prize for Young University Teachers of the Krupp foundation and has received funding from the European Research Council (ERC) under the European Union's Horizon 2020 research and innovation programme (grant agreement No. 101001179).
The second author is grateful for the support of the Thomas Jefferson Fund and the NSF (DMS-1601306).}
\keywords{$t$-hooks, partitions, $q$-brackets, Eichler integrals}

\begin{abstract}
We consider the $t$-hook functions on partitions
$f_{a,t}: \mathcal{P}\rightarrow \C$ defined by
$$
f_{a,t}(\lambda):=t^{a-1} \sum_{h\in \mathcal{H}_t(\lambda)}\frac{1}{h^a},
$$
where $\mathcal{H}_t(\lambda)$ is the multiset of partition hook numbers that are multiples of $t$.
The Bloch-Okounkov $q$-brackets $\langle f_{a,t}\rangle_q$  include Eichler integrals of the classical Eisenstein series.
For even  $a\geq 2$, we show that these $q$-brackets are natural pieces of
 weight $2-a$ sesquiharmonic and harmonic Maass forms, while for odd $a\leq -1,$
we show that they are holomorphic quantum modular forms.
We use these results to obtain new formulas of Chowla-Selberg type,
and asymptotic expansions involving values of the Riemann zeta-function and Bernoulli numbers.  We make use of work of Berndt, Han and Ji, and Zagier. 
\end{abstract}

\maketitle
\section{Introduction and statement of results}

A {\it partition} of a non-negative integer $n$ is any nonincreasing sequence of positive integers, say 
 $\lambda=(\lambda_1, \lambda_2, \dots, \lambda_m),$  that satisfies $|\lambda|=\lambda_1+\dots+\lambda_m= n$. 
 Each partition has a {\it Ferrers-Young} diagram
$$
\begin{matrix}
\bullet & \bullet & \bullet & \dots \bullet &\ \leftarrow  \ {\text {\rm $\lambda_1$ many nodes}}\\
\bullet & \bullet &\dots &\bullet & \ \leftarrow \ {\text {\rm $\lambda_2$ many nodes}}\\
\vdots & \vdots & \vdots & \ & \ &  \\
\bullet & \dots & \bullet & \ & \   \leftarrow \ {\text {\rm $\lambda_m$ many nodes}},
\end{matrix}
$$
and each node has a {\it hook number}. The node in row $\ell$ and column $j$ has hook number
$h(\ell,j):=\lambda_\ell-\ell+\lambda'_j-j+1,$ where $\lambda'_j$ is the number of nodes in column $j$.
These numbers play significant roles in combinatorics, number theory, and representation theory. 
The recent Bloch-Okounkov theory of $q$-brackets \cite{BlochOkounkov} is a significant addition to these fields,
and some of the most striking examples involve hook numbers. 

For functions $f: \mathcal{P}\mapsto \C$ on the integer partitions, these $q$-brackets are the power series
\begin{equation*}
\langle f\rangle_q:=\frac{\sum_{\lambda\in \mathcal{P}} f(\lambda)q^{|\lambda|}}{\sum_{\lambda\in \mathcal{P}}q^{|\lambda|}} \in \C[[q]],
\end{equation*}
 which represent a ``weighted average'' of $f.$
 Schneider \cite{Schneider} has developed a ``multiplicative theory of partitions'' based on $q$-brackets,
 which includes partition analogues of classical number theoretic facts such as M\"obius inversion, special values of zeta-functions, etc.
 In this note we add to the expanding role of $q$-brackets and hook numbers that bridges combinatorics and the theory of modular forms.
 
Bloch and Okounkov defined $q$-brackets with the goal of generating spaces of modular forms.
Interpreted as Fourier expansions in $q:=e^{2\pi i z}$, where $z\in \mathbb{H},$ the upper-half of the complex plane, they proved that the ring of quasimodular forms is generated by the $q$-brackets of special  functions $f$ associated to
shifted symmetric polynomials \cite{BlochOkounkov}. This work has been expanded and refined by Zagier \cite{Zagier}, and Griffin, Jameson, and Trebat-Leder subsequently analyzed the $p$-adic aspects of these constructions \cite{GJTL}. 

Nekrasov and Okounkov later obtained striking identities for the modular forms that are powers of
Dedekind's eta-function $\eta(z):=q^{\frac{1}{24}}\prod_{n=1}^{\infty}(1-q^n)$ \cite{NekrasovOkounkov}. 
For $\alpha\in \C$, define the function 
$$
D_{\alpha}(\lambda):=\prod_{h\in \mathcal{H(\lambda)}} \left(1-\frac{\alpha}{h^2}\right),
$$
where $\mathcal{H}(\lambda)$ denotes the multiset of hook numbers of the partition $\lambda$.
A simple reformulation of (6.12) of \cite{NekrasovOkounkov}, using Euler's partition generating function 
\begin{equation*}
\sum_{n=0}^{\infty}p(n)q^n=\sum_{\lambda \in \mathcal{P}}q^{|\lambda|}=\prod_{n=1}^{\infty}\frac{1}{1-q^n},
\end{equation*}
 asserts that
$q^{\frac{\alpha}{24}}\cdot \langle D_{\alpha} \rangle_q= \eta(z)^{\alpha}.$
For integers $\alpha,$ these are
weight $\frac\alpha2$ modular forms (see Chapter 1 of \cite{CBMS}).
 
It is natural to ask whether further modular objects arise from $q$-brackets.  In this note we obtain a comprehensive
framework of  nearly modular $q$-brackets, which includes an earlier example by the second author \cite{K}, that correspond to Eisenstein series. Namely, we make use of work of Han and Ji \cite{Han, Han2} to define
a natural infinite family of weighted $t$-hook functions whose $q$-brackets naturally give rise to  harmonic Maass forms, sesquiharmonic Maass forms, and holomorphic quantum modular forms.  

To motivate our results, we first recall the beautiful realization  of the Eisenstein series as $q$-brackets. For a partition $\lambda = (\lambda_{1}, \dots \lambda_{m})$ and $k\in\mathbb N$, let $S_{2k}(\lambda) := \sum_{j =1}^{m} \lambda_{j}^{2k-1}$.  Note that $S_{2}(\lambda) = |\lambda|$ is the ``size" function.  In \cite{Zagier} it was shown that
\begin{equation*}
\langle S_{2k} \rangle_q=\frac{\sum_{\lambda\in \mathcal{P}} S_{2k}(\lambda) q^{|\lambda|}}{\sum_{\lambda\in \mathcal{P}}q^{|\lambda|}}=
\sum_{n=1}^{\infty}\sigma_{2k-1}(n)q^n=\frac{B_{2k}(1-E_{2k}(z))}{4k},
\end{equation*}
where $\sigma_{\ell}(n):=\sum_{1\leq d\mid n}d^\ell$, $B_{n}$ is the $n$-th Bernoulli number, and 
$$E_{2k}(z):=1-\frac{4k}{B_{2k}}\sum_{n=1}^{\infty}\sigma_{2k-1}(n)q^n
$$
 is the weight $2k$ Eisenstein series. Recall that $E_{2k}(z)$ is a  modular form  when $2k\geq 4.$
 
We realize the {\it Eichler integrals} of these (and other) forms in terms of $q$-brackets of functions involving partition hook lengths.   For $a \in \C,$ we define
\begin{equation*}
\mathcal{E}_{2-a}(z) := \sum_{n =1}^{\infty} \frac{q^n}{n^{a-1}(1-q^n)} = \sum_{n =1}^{\infty} \sigma_{1-a}(n) q^n.
\end{equation*}
If $2k \geq 4$ is even, then $\mathcal{E}_{2-2k}(z)$ is the usual Eichler integral of $E_{2k}(z).$  Although $E_2(z)$ is not a modular form, it is well-known that
$$
E_2^*(z):=E_2(z)-\frac{3}{\pi\cdot \im(z)}
$$
is a non-holomorphic weight two modular form (for example, see Chapter 6 of \cite{BFOR}). Therefore, it is natural to consider its Eichler integral
\begin{equation*}
\mathcal{E}_{0}(z):=\sum_{n=1}^{\infty}\sigma_{-1}(n)q^n.
\end{equation*}
These Eichler integrals enjoy certain modularity properties that were determined by Berndt in the 1970s \cite{Berndt}. Moreover, 
Bettin and Conrey \cite{BC} considered the modularity in the general case where $k\in \C.$

We now turn to the goals of this note. The first goals are to realize these Eichler integrals as $q$-brackets, which we then use to obtain various types of modular forms.
This work extends an example by the second author corresponding to the case of $\mathcal{E}_{0}(z).$ 
To make this precise, we make use of
 $t$-hooks, the hook numbers which are
multiples of $t$. To this end, for each $a \in \C$ and $t\in \N,$ we define $f_{a,t}: \mathcal{P}\mapsto \C$ by
\begin{equation*}
f_{a,t}(\lambda):=t^{a-1} \sum_{h\in \mathcal{H}_t(\lambda)}\frac{1}{h^a},
\end{equation*}
where $\mathcal{H}_t(\lambda)$ is the multiset of hook numbers which are multiples of $t$. 

\begin{example} We consider the partition $\lambda=4+3+1,$
which has Ferrers-Young diagram
$$
\begin{matrix} \bullet_6 & \bullet_4 & \bullet_3 & \bullet_1 \\
           \bullet_4 & \bullet_2 & \bullet_1\\
           \bullet_1 
           \end{matrix}
$$         
(the subscripts denote the the hook numbers). We find that
$\mathcal{H}(\lambda)=\{1,1,1,2,3,4,4,6\},$ $\mathcal{H}_2(\lambda)=\{2,4,4,6\},$ 
and $\mathcal{H}_3(\lambda)=\{3,6\}.$ Therefore, we find that
\begin{displaymath}
\begin{split}
f_{3,1}(\lambda)&= 1+1+1+\frac{1}{8}+\frac{1}{27}+\frac{1}{64}+\frac{1}{64}+\frac{1}{216}=\frac{307}{96},\\
f_{3,2}(\lambda)&=2^2\left(\frac{1}{8}+\frac{1}{64}+\frac{1}{64}+\frac{1}{216}\right)=\frac{139}{216},\\
f_{3,3}(\lambda)&=3^2\left(\frac{1}{27}+\frac{1}{216}\right)=\frac{3}{8}.
\end{split}
\end{displaymath}
\end{example} 

For convenience, we define the generating function
\begin{equation*}
H_{a,t}(z):=\sum_{\lambda\in \mathcal{P}} f_{a,t}(\lambda)q^{|\lambda|}.
\end{equation*}
Work of Han and Ji \cite{Han, Han2} shows that the Eichler integrals
$\mathcal{E}_{2-a}(z)$ are the $q$-brackets of $f_{a,t}.$

\begin{theorem}\label{Theorem1}
If $t\in\N$ and $a \in \C$, then we have
$$
\langle f_{a,t}\rangle_q=\prod_{n=1}^{\infty} (1-q^n)\cdot H_{a,t}(z)=\mathcal{E}_{2-a}(tz).
$$
\end{theorem}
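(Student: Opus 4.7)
\medskip

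\noindent\textbf{Proof plan.} The first equality in the statement is essentially tautological. By the definition of the $q$-bracket together with Euler's identity $\sum_{\lambda\in\mathcal{P}} q^{|\lambda|} = \prod_n (1-q^n)^{-1}$, one has
$$
\langle f_{a,t}\rangle_q \;=\; \frac{\sum_\lambda f_{a,t}(\lambda)\, q^{|\lambda|}}{\sum_\lambda q^{|\lambda|}} \;=\; H_{a,t}(z) \cdot \prod_{n=1}^\infty (1-q^n),
$$
so only the second equality has real content. The plan is to invoke the Han--Ji $t$-hook identity and then specialize.

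\medskip

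\noindent The key input is the Han--Ji formula for sums over $t$-hooks, which (in the form relevant here) asserts that for a suitable function $F$,
$$
\sum_{\lambda \in \mathcal{P}} q^{|\lambda|} \sum_{h \in \mathcal{H}_t(\lambda)} F(h) \;=\; t \cdot \prod_{n=1}^\infty (1-q^n)^{-1} \cdot \sum_{n=1}^\infty F(tn) \, \frac{n q^{tn}}{1-q^{tn}}.
$$
Applying this with $F(h) := t^{a-1}/h^a$ gives $F(tn) = t^{a-1}/(tn)^a = 1/(t n^a)$, so the right-hand side telescopes to
$$
\prod_{n=1}^\infty (1-q^n)^{-1} \cdot \sum_{n=1}^\infty \frac{q^{tn}}{n^{a-1}(1-q^{tn})} \;=\; \prod_{n=1}^\infty (1-q^n)^{-1} \cdot \mathcal{E}_{2-a}(tz),
$$
where the last equality is the definition of $\mathcal{E}_{2-a}$ with $z$ replaced by $tz$. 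Multiplying through by $\prod_n (1-q^n)$ yields the theorem.

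\medskip

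\noindent If Han--Ji's result is not already stated in this precise form, it can be derived via the $t$-core/$t$-quotient bijection $\lambda \leftrightarrow (\lambda_{(t)}, \lambda^{(0)}, \dots, \lambda^{(t-1)})$ satisfying $|\lambda| = |\lambda_{(t)}| + t\sum_i |\lambda^{(i)}|$ and the multiset equality $\mathcal{H}_t(\lambda) = t \cdot \bigsqcup_{i=0}^{t-1} \mathcal{H}(\lambda^{(i)})$. The double sum then factors as
$$
C_t(q) \cdot t \cdot \prod_{n=1}^\infty (1-q^{tn})^{-(t-1)} \cdot \sum_{\nu \in \mathcal{P}} q^{t|\nu|} \sum_{h \in \mathcal{H}(\nu)} F(th),
$$
where $C_t(q) := \sum_{t\text{-cores } \mu} q^{|\mu|}$ and the factor $t$ arises from symmetry over the $t$ quotient components. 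The inner sum reduces, via Han's original formula (the $t=1$ case, itself derivable from differentiating $\prod(1-q^n)^{-1}$ combined with the identity between $|\lambda|$ and the number of hooks of $\lambda$), to $\prod(1-q^{tn})^{-1} \sum_n F(tn)\, nq^{tn}/(1-q^{tn})$. Combined with $C_t(q) \cdot \prod_n (1-q^{tn})^{-t} = \prod_n (1-q^n)^{-1}$, this recovers the displayed identity.

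\medskip

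\noindent\textbf{Anticipated obstacle.} There is no genuine analytic difficulty; the main care is bookkeeping. Specifically, one must verify that the factor of $t$ produced by summing over the $t$ quotient components cancels precisely against the factor $t$ appearing in $F(tn) = 1/(tn^a)$, and that the exponent shift in $1/n^{a-1}$ (versus the naive $1/n^a$ one would first write down) comes from the multiplier $n$ in $nq^{tn}/(1-q^{tn})$. Matching these constants is what converts a sum of $1/h^a$ over $t$-hooks into the Eichler integral $\mathcal{E}_{2-a}(tz) = \sum_n \sigma_{1-a}(n) q^{tn}$, and is the sole nontrivial check.
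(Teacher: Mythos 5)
Your proposal is correct and takes essentially the same route as the paper: the paper also treats the first equality as immediate from the definition of the $q$-bracket, and obtains the second by specializing the Han--Ji identity (Theorem 7.5 of \cite{Han2}, quoted as Theorem \ref{Han2}) at $x=1$, which after accounting for the normalizing factor $t^{a-1}$ is precisely your displayed formula with $F(h)=t^{a-1}h^{-a}$. Your supplementary derivation of that identity via the $t$-core/$t$-quotient bijection and the multiset equality $\mathcal{H}_t(\lambda)=t\cdot\bigsqcup_{i}\mathcal{H}\bigl(\lambda^{(i)}\bigr)$ is sound, but the paper does not include it, relying instead on the citation.
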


Using the properties of Eichler integrals, we find that many of the $\langle f_{a,t}\rangle_q$ are natural
parts of various types of modular forms, which we now recall.
A weight $k$ \textit{harmonic Maass form} (for example, see \cite{BFOR})  is a real-analytic modular form that is annihilated by the weight $k$ hyperbolic Laplacian $\Delta_{k} := -\xi_{2-k} \circ \xi_{k}$, where $\xi_{k}:= 2iy^{k} \overline{\frac{\partial}{\partial \overline{z}}}$ and that grows at most linear exponentially towards the cusps.  A weight $k$ \textit{sesquiharmonic Maass form} is a real--analytic modular form that is annihilated instead by $\Delta_{k,2} :=- \xi_{k} \circ \xi_{2-k} \circ \xi_{k}$. 
We require the incomplete Gamma function $\Gamma(s,z) := \int_{z}^{\infty} e^{-t} t^{s-1} dt,$ which we normalize to define  $\Gamma^{*}(s,z) := \Gamma(s,z)/\Gamma(s)$. 

\begin{theorem} \label{Theorem2}
If $k\in\mathbb N$, then the following are true.

\begin{enumerate}[leftmargin=*]
	\item[\rm (1)]  If $k=1$, then $\mathbb{E}_{0}(tz)$ is a weight zero sesquiharmonic Maass form on $\Gamma_0(t)$, where
	\begin{equation*}
		\mathbb{E}_{0}(tz) := ty +\frac{6}{\pi} \left( \gamma - \log(2) - \frac{\log(ty)}{2} - \frac{6 \zeta'(2)}{\pi^2} + \langle f_{2, t} \rangle_{q} + \sum_{n =1}^{\infty} \sigma_{-1}(n) \overline{q}^{tn} \right).
	\end{equation*}

	\item[\rm (2)] If $k \geq 2$, then $\mathbb{E}_{2-2k}(tz)$ is a weight $2-2k$ harmonic Maass form on $\Gamma_0(t)$, where
	\begin{multline*}
		\mathbb{E}_{2-2k}(tz)\\ := (ty)^{2k-1}  + \frac{2 \cdot (2k)! }{B_{2k}(4 \pi)^{{2k-1}}} \left(\zeta(2k-1) + \langle f_{2k,t} \rangle_{q} + \sum_{n =1}^{\infty} \sigma_{1-2k}(n) \Gamma^{*}(2k-1, 4 \pi tn y) q^{-tn} \right).
	\end{multline*}
\end{enumerate}
\end{theorem}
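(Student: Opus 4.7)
The plan is to apply Theorem~\ref{Theorem1} to replace each $\langle f_{a,t}\rangle_q$ by the Eichler integral $\mathcal{E}_{2-a}(tz)$, and then to recognize $\mathbb{E}_{2-2k}(tz)$ as the standard non-holomorphic completion of that Eichler integral into a (sesqui)harmonic Maass form. The polynomial $(ty)^{2k-1}$, the constant $\zeta(2k-1)$, and the $\overline{q}$-series in~(2) (respectively the logarithmic term and $\overline{q}$-series in~(1)) are chosen precisely so that the completed object lies in $\ker\Delta_{2-2k}$ (resp.\ $\ker\Delta_{0,2}$). Modular invariance on $\Gamma_0(t)$ will then follow from the classical transformation laws of $E_{2k}(tz)$ and $E_2^{*}(tz)$, combined with Berndt's transformation laws for the Eichler integrals $\mathcal{E}_{2-a}(z)$ recalled in the introduction.

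For part~(2) with $k\ge 2$, I would verify harmonicity by a direct $\xi_{2-2k}$ computation. Using $\xi_{2-2k}(y^{2k-1})=2k-1$, the fact that $\xi_{2-2k}$ annihilates the holomorphic summands, and the direct evaluation
\begin{equation*}
\xi_{2-2k}\!\left(\Gamma^{*}(2k-1,4\pi tny)\,q^{-tn}\right)=-\frac{(4\pi tn)^{2k-1}}{(2k-2)!}\,q^{tn},
\end{equation*}
together with the identity $n^{2k-1}\sigma_{1-2k}(n)=\sigma_{2k-1}(n)$, the pieces telescope to
\begin{equation*}
\xi_{2-2k}\bigl(\mathbb{E}_{2-2k}(tz)\bigr)=t^{2k-1}(2k-1)\,E_{2k}(tz).
\end{equation*}
Since $E_{2k}(tz)$ is a holomorphic modular form of weight $2k$ on $\Gamma_0(t)$, it is annihilated by $\xi_{2k}$, so $\mathbb{E}_{2-2k}(tz)\in\ker\Delta_{2-2k}$, confirming the harmonic Maass property.

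For part~(1), the role of $E_{2k}$ is played by $E_2^{*}(tz)=E_2(tz)-3/(\pi ty)$, which is itself a weight~$2$ harmonic Maass form on $\Gamma_0(t)$ (because $\xi_2(E_2^{*})=3/\pi$ is killed by $\xi_0$). Its ``Eichler integral'' must therefore land in the sesquiharmonic world at weight~$0$. A termwise application of $\xi_0$, using $\xi_0(ty)=t$, $\xi_0(\log y)=1/y$, $\xi_0(\overline{q}^{tn})=-4\pi tn\,q^{tn}$, and the elementary identity $n\sigma_{-1}(n)=\sigma_1(n)$, yields
\begin{equation*}
\xi_0\bigl(\mathbb{E}_0(tz)\bigr)=t-\frac{3}{\pi y}-24t\sum_{n\ge 1}\sigma_1(n)q^{tn}=t\cdot E_2^{*}(tz),
\end{equation*}
so that $\Delta_{0,2}(\mathbb{E}_0(tz))=-\xi_0\xi_2\xi_0(\mathbb{E}_0(tz))=0$.

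The main obstacle will be pinning down the additive constants. The value $\zeta(2k-1)$ in~(2) and the value $\gamma-\log 2-6\zeta'(2)/\pi^{2}$ in~(1) are invisible to $\xi_{2-2k}$ and $\xi_0$, so they must be fixed by the modular transformation law: they are precisely the constants required so that the cocycle defect of $\mathcal{E}_{2-a}(tz)$ under a Fricke-type involution (as computed by Berndt) cancels the defect of the non-holomorphic completion. Once these constants are calibrated, modularity on $\Gamma_0(t)$ reduces to invariance under $T\colon z\mapsto z+1$ (immediate from the $q$-expansion) and the cited transformation laws under a single additional generator.
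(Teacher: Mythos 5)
Your differential computations are correct and complete as far as they go: $\xi_{2-2k}\bigl((ty)^{2k-1}\bigr)=t^{2k-1}(2k-1)$, the evaluation $\xi_{2-2k}\bigl(\Gamma^{*}(2k-1,4\pi tny)q^{-tn}\bigr)=-\frac{(4\pi tn)^{2k-1}}{(2k-2)!}q^{tn}$, the telescoping to $t^{2k-1}(2k-1)E_{2k}(tz)$, and likewise $\xi_0\bigl(\mathbb{E}_0(tz)\bigr)=tE_2^{*}(tz)$ with $\xi_2(E_2^{*})=\frac{3}{\pi}$ all check. This is also a genuinely different route from the paper: for part (2) the paper does no computation at all, citing the known Fourier expansion of the weight $2-2k$ harmonic Eisenstein series (Corollary 6.16 of \cite{BFOR}); for part (1) it proceeds spectrally, setting $\widetilde{\mathbb{E}}_0(z):=\lim_{s\to 1}\bigl(E(z,s)-\frac{3}{\pi(s-1)}\bigr)$, deducing sesquiharmonicity from $\Delta_0 E(z,s)=s(1-s)E(z,s)$, and identifying $\widetilde{\mathbb{E}}_0=\mathbb{E}_0$ from the Bessel--Fourier expansion of $E(z,s)$ at $s=1$, where the Kronecker-limit evaluation of the constant term produces $\frac{6\gamma}{\pi}-\frac{6\log 2}{\pi}-\frac{3\log y}{\pi}-\frac{36\zeta'(2)}{\pi^3}$. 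The decisive advantage of that construction is that modularity is free: $E(z,s)$ is invariant for every $s$, and the subtracted pole term is independent of $z$.

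That is exactly where your proposal has a genuine gap: the modularity statement --- the actual content of ``is a (sesqui)harmonic Maass form on $\Gamma_0(t)$'' --- is never proved, only promised via ``calibration,'' and the group-theoretic reduction you state is wrong. $\Gamma_0(t)$ is not generated by $T$ and one further element for general $t$, and the Fricke-type map $z\mapsto -\frac{1}{t^2z}$ you implicitly invoke does not lie in $\Gamma_0(t)$. The repair is standard but must be said: $\mathbb{E}_{2-2k}(tz)$ is the level-one function evaluated at $tz$, so it suffices to prove $\SL_2(\Z)$-modularity of $\mathbb{E}_{2-2k}(z)$ (where $S$ and $T$ do generate) and pass to $\Gamma_0(t)$ by the usual $z\mapsto tz$ argument. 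Even then, the $S$-transformation is real work you have not carried out: neither $y^{2k-1}$ nor the incomplete-Gamma series is separately covariant, and their joint defect is an Eichler-type period integral that must actually be computed and matched against Berndt's cocycle (Theorem \ref{B} with $k$ replaced by $k-1$, which is what supplies the constant $\zeta(2k-1)$); asserting that the constants ``must be'' calibrated is circular, since the theorem being proved is precisely that this cancellation occurs. For part (1) the cited transformation laws do not even apply: Theorem \ref{B} covers $\mathcal{E}_{-2k}$ for $k\in\N$ but not $\mathcal{E}_0$, so there you would need the transformation of $\log\eta$ (via $\mathcal{E}_0(z)=\frac{\pi i z}{12}-\log\eta(z)$) or the paper's spectral argument, and the constant $\gamma-\log 2-\frac{6\zeta'(2)}{\pi^2}$ together with the $\log(ty)$ term is forced by the Kronecker limit formula rather than by any cocycle bookkeeping you exhibit. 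Finally, the growth condition at the cusps, part of the definition of these forms, goes unaddressed; the paper disposes of it from the explicit Fourier expansion.
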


\begin{rmk}
Theorem~\ref{Theorem2} {\rm (1)} is a reformulation of an earlier result by the second author in \cite{K}.
\end{rmk}

For completeness, we describe the modularity properties of these $q$-brackets.  For $k\in\mathbb N$, we define
$$
P_{-2k}(z) 
:= - \frac{1}{2} (2 \pi i)^{2k+1} \sum_{m=0}^{k+1} \frac{B_{2m}}{(2m)!} \frac{B_{2k+2-2m}}{(2k+2-2m)!} \cdot z^{2m-1},
$$
which we use to define
\begin{equation}\label{Mfunction}
M_{-2k, t}(z) := \langle f_{2k+2, t} \rangle_{q} -\frac{1}{2} P_{-2k}(tz) + \frac{1}{2} \zeta(2k+1).
\end{equation}
These functions enjoy negative weight $-2k$ modularity properties under $z\mapsto z+1$ and
$z\mapsto -\frac{1}{t^2z}.$
\begin{theorem}\label{Theorem3}
If $k, t \in \N$, then the following are true for $z \in \mathbb{H}$.

\begin{enumerate}[leftmargin=*]
	\item[\rm (1)] We have that 
	\begin{multline*}
		M_{-2k, t}(z+1) - M_{-2k, t}(z) \\= \frac{1}{4} (2 \pi i)^{2k+1} \sum_{m=0}^{k+1} \sum_{r=1}^{2m-1} \frac{B_{2m}}{(2m)!} \frac{B_{2k+2-2m}}{(2k+2-2m)!} t^{2m-1} \binom{2m-1}{r} z^{2m-1-r}.
	\end{multline*}
	\item [\rm (2)] We have that
	\begin{equation*}
		M_{-2k, t}(z) = (tz)^{2k}M_{-2k,t} \left(-\frac{1}{t^2 z} \right).
	\end{equation*}
\end{enumerate}
\end{theorem}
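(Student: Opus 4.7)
The plan is to reduce both parts to transformation properties of the Eichler integral. By Theorem~\ref{Theorem1}, $\langle f_{2k+2,t}\rangle_q = \mathcal{E}_{-2k}(tz)$, so I write $M_{-2k,t}(z) = M(tz)$ with
$$M(w) := \mathcal{E}_{-2k}(w) - \tfrac{1}{2}P_{-2k}(w) + \tfrac{1}{2}\zeta(2k+1);$$
the two assertions then become statements about $M$ under $w \mapsto w+t$ and $w \mapsto -1/w$, respectively. Both will fall out of standard identities once the correct constants are tracked.

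For part (1), I would use that $\mathcal{E}_{-2k}(tz) = \sum_{n \geq 1} \sigma_{-2k-1}(n) e^{2\pi i tnz}$ contains only integer powers of $e^{2\pi i z}$, so it is invariant under $z \mapsto z+1$. Consequently
$$M_{-2k,t}(z+1) - M_{-2k,t}(z) = -\tfrac{1}{2}\bigl(P_{-2k}(tz+t) - P_{-2k}(tz)\bigr),$$
and the stated double sum follows by expanding $(tz+t)^{2m-1} = t^{2m-1}(z+1)^{2m-1}$ via the binomial theorem and peeling off the $r=0$ term from each monomial.

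For part (2), the central input is the classical transformation law of $\mathcal{E}_{-2k}$ under $w \mapsto -1/w$, due to Berndt \cite{Berndt} (and essentially equivalent to Ramanujan's formula for $\zeta(2k+1)$). It has the shape
$$\mathcal{E}_{-2k}(w) - w^{2k}\mathcal{E}_{-2k}\!\left(-\tfrac{1}{w}\right) = P_{-2k}(w) - \tfrac{1}{2}\zeta(2k+1) + \tfrac{1}{2}\zeta(2k+1)\,w^{2k}.$$
The crucial algebraic identity is the involution $w^{2k} P_{-2k}(-1/w) = -P_{-2k}(w)$, which follows from the symmetry $m \leftrightarrow k+1-m$ of the Bernoulli product defining $P_{-2k}$, together with the sign $(-1)^{2m-1}=-1$. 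Combining this involution with Berndt's identity and substituting into the definition of $M$ gives $M(w) = w^{2k} M(-1/w)$; setting $w = tz$ and noting $-1/w = t \cdot (-1/(t^2z))$ yields the assertion.

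The hard part is purely bookkeeping: correctly extracting Berndt's transformation from \cite{Berndt} with the exact $\zeta(2k+1)$-constants. The point of the $-\tfrac{1}{2}P_{-2k}(tz) + \tfrac{1}{2}\zeta(2k+1)$ correction in the definition \eqref{Mfunction} of $M_{-2k,t}$ is precisely to symmetrize Berndt's otherwise asymmetric cocycle, so verifying the cancellation amounts to a direct but fiddly computation once the involution identity for $P_{-2k}$ is in hand.
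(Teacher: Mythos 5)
Your proposal follows the paper's proof essentially step for step: the reduction via Theorem~\ref{Theorem1}, periodicity of $\mathcal{E}_{-2k}(tz)$ plus a binomial expansion of $P_{-2k}(tz+t)-P_{-2k}(tz)$ for part (1), and for part (2) Berndt's transformation law combined with the involution $w^{2k}P_{-2k}\left(-\frac{1}{w}\right)=-P_{-2k}(w)$, which you justify correctly via the symmetry $m\leftrightarrow k+1-m$ and the odd exponents $2m-1$.

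One caution, which you inherit from the paper rather than introduce, but which deserves emphasis because you explicitly defer it as ``fiddly bookkeeping'': the sign of $P_{-2k}$ in your quoted shape of Berndt's law. You write the cocycle as $P_{-2k}(w)+\frac{1}{2}\left(w^{2k}-1\right)\zeta(2k+1)$, i.e.\ with $+P_{-2k}(w)$, and your cancellation $M(w)=w^{2k}M\left(-\frac1w\right)$ needs that sign; this matches what the paper's proof uses after substituting $z\mapsto tz$, but it is \emph{opposite} to Theorem~\ref{B} as stated in Section~\ref{NutsAndBolts}, which has $-P_{-2k}(z)$. The minus sign is the true one: at $z=i$, $k=1$, Theorem~\ref{B} with $-P_{-2k}$ reproduces Ramanujan's formula $\zeta(3)=\frac{7\pi^3}{180}-2\sum_{n\geq 1}\frac{1}{n^3\left(e^{2\pi n}-1\right)}$, while the $+$ sign gives a false statement. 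With the correct sign and the involution, the definition \eqref{Mfunction} yields
\begin{equation*}
M_{-2k,t}(z)-(tz)^{2k}M_{-2k,t}\left(-\frac{1}{t^2z}\right)=-2P_{-2k}(tz),
\end{equation*}
so part (2) holds only after flipping the sign of $\frac{1}{2}P_{-2k}(tz)$ in \eqref{Mfunction} (which in turn flips the sign of the right-hand side in part (1)). In other words, the bookkeeping you postpone is exactly where the content sits, and as written your argument relies on the sign-flipped form of Berndt's identity just as the paper's proof does. A second, smaller point, again shared with the paper: your binomial expansion of $(z+1)^{2m-1}$ is valid only for $m\geq 1$; the $m=0$ term of $P_{-2k}$ is proportional to $w^{-1}$, and its contribution $\frac{1}{t}\left(\frac{1}{z+1}-\frac{1}{z}\right)=-\frac{1}{tz(z+1)}$ to the difference in part (1) is not captured by the stated finite sum, whose inner sum is empty when $m=0$.
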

\begin{rmk}
The case where $k=0$ in (\ref{Mfunction}) was obtained previously in \cite{K}.
\end{rmk}
Theorem \ref{Theorem3} implies certain simple modularity properties 
for the Fourier series
\begin{equation*}
H_{a,1}^{*}(z) := q^{-\frac{1}{24}} H_{a, 1}(z),
\end{equation*}
(i.e., when we choose $t=1$).
To further ease notation, we define
\begin{equation*}
\Psi_{-2k}(z) := -P_{-2k}\left(-\frac{1}{z} \right) - \frac{1}{2}\left(1-z^{-2k}\right) \zeta(2k+1).
\end{equation*}
\begin{corollary}\label{Corollary4}
If $z \in \mathbb{H}$ and $k \in  \N$, then the following are true.

\begin{enumerate}[leftmargin=*]
	\item[\rm (1)] We have that
\begin{equation*}
H_{2k+2, 1}^{*}(z+1) = e^{-\frac{\pi i}{12}} H_{2k+2, 1}^{*}(z).
\end{equation*}

	\item [\rm (2)]  We have that 
\begin{equation*}
H_{2k+2, 1}^{*} \left(-\frac{1}{z} \right)-\frac{1}{z^{2k}\sqrt{-iz}}H_{2k+2, 1}^{*} (z) = \frac{\Psi_{-2k}(z)}{\eta \left(-\frac{1}{z}\right)}.
\end{equation*}
\end{enumerate}
\end{corollary}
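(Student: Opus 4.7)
The plan is to derive Corollary \ref{Corollary4} by combining the identity $H_{2k+2,1}^{*}(z) = \langle f_{2k+2,1}\rangle_q/\eta(z)$ with the transformation formulas for $M_{-2k,1}$ established in Theorem \ref{Theorem3} and the classical transformations of the Dedekind eta function. Specializing Theorem \ref{Theorem1} to $t=1$ and using the factorization $\prod_{n\geq 1}(1-q^n) = q^{-\frac{1}{24}}\eta(z)$, one obtains
\[
\langle f_{2k+2,1}\rangle_q = \eta(z)\cdot H_{2k+2,1}^{*}(z),
\]
so the modularity of $H_{2k+2,1}^{*}$ is fully determined by that of $\langle f_{2k+2,1}\rangle_q$ together with the well-known $\eta(z+1) = e^{\pi i/12}\eta(z)$ and $\eta(-1/z) = \sqrt{-iz}\,\eta(z)$.

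Part (1) then follows at once: since $\langle f_{2k+2,1}\rangle_q \in \C[[q]]$ is a formal Fourier series in $q = e^{2\pi i z}$, it is invariant under $z \mapsto z+1$, and dividing by $\eta(z+1)$ produces precisely the factor $e^{-\pi i/12}$ claimed.

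For Part (2) the strategy is to specialize Theorem \ref{Theorem3}(2) at $t=1$ to obtain $M_{-2k,1}(-1/z) = z^{-2k} M_{-2k,1}(z)$, substitute the definition (\ref{Mfunction}) of $M_{-2k,1}$ on both sides, and solve for the modular defect
\[
\langle f_{2k+2,1}\rangle_q\!\left(-\tfrac{1}{z}\right) - z^{-2k}\langle f_{2k+2,1}\rangle_q(z).
\]
The $M$-contributions cancel, leaving an explicit combination of $P_{-2k}(-1/z)$, $z^{-2k}P_{-2k}(z)$, and $(1-z^{-2k})\zeta(2k+1)$. The palindromic symmetry
\[
\tfrac{B_{2m}}{(2m)!}\tfrac{B_{2k+2-2m}}{(2k+2-2m)!} = \tfrac{B_{2(k+1-m)}}{(2(k+1-m))!}\tfrac{B_{2m}}{(2m)!}
\]
of the Bernoulli coefficients in $P_{-2k}$, together with the fact that each odd monomial $z^{2m-1}$ picks up the factor $(-1)^{2m-1} = -1$ under $z \mapsto -1/z$, collapses the two $P$-contributions into a single multiple of $P_{-2k}(-1/z)$ matching the one in $\Psi_{-2k}(z)$. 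Dividing through by $\eta(-1/z) = \sqrt{-iz}\,\eta(z)$ then transports the identity to $H_{2k+2,1}^*$ in the form claimed.

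The main obstacle is nothing deep, but rather the sign and symmetry bookkeeping in the step that collapses $\tfrac{1}{2}P_{-2k}(-1/z) - \tfrac{1}{2}z^{-2k}P_{-2k}(z)$ into a single $P_{-2k}(-1/z)$ term. One has to verify that after the palindromic reindexing $m \leftrightarrow k+1-m$ of the Bernoulli products, the two halves reinforce (rather than cancel) with exactly the sign needed to reproduce the $P_{-2k}(-1/z)$ summand of $\Psi_{-2k}(z)$; the remaining $\zeta(2k+1)$-contribution then matches the constant piece of $\Psi_{-2k}(z)$ automatically.
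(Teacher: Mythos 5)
Your reduction $\langle f_{2k+2,1}\rangle_q=\eta(z)H^{*}_{2k+2,1}(z)$ and your Part (1) are correct and match the paper. Part (2), however, has a genuine gap: the route through Theorem \ref{Theorem3}(2) does not close with the sign you claim. Carrying out your own plan with $t=1$, substituting (\ref{Mfunction}) into $M_{-2k,1}(z)=z^{2k}M_{-2k,1}\left(-\frac{1}{z}\right)$ and solving for the defect yields
\begin{equation*}
\mathcal{E}_{-2k}\left(-\frac{1}{z}\right)-z^{-2k}\mathcal{E}_{-2k}(z)
=\frac{1}{2}P_{-2k}\left(-\frac{1}{z}\right)-\frac{1}{2}z^{-2k}P_{-2k}(z)-\frac{1}{2}\left(1-z^{-2k}\right)\zeta(2k+1),
\end{equation*}
and the palindromic identity $z^{-2k}P_{-2k}(z)=-P_{-2k}\left(-\frac{1}{z}\right)$, which you correctly invoke, collapses the two $P$-contributions to $+P_{-2k}\left(-\frac{1}{z}\right)$. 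But $\Psi_{-2k}(z)=-P_{-2k}\left(-\frac{1}{z}\right)-\frac{1}{2}\left(1-z^{-2k}\right)\zeta(2k+1)$ carries the \emph{opposite} sign on the $P$-term, so your final expression equals $\Psi_{-2k}(z)+2P_{-2k}\left(-\frac{1}{z}\right)$, not $\Psi_{-2k}(z)$. The two halves do reinforce, as you flagged, but to the wrong sign; this is exactly the bookkeeping step you identified as the main obstacle, and it fails.

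The underlying reason is that Theorem \ref{Theorem3}(2) as printed is inconsistent with Theorem \ref{B} as printed: combined with (\ref{Mfunction}), Theorem \ref{Theorem3}(2) is equivalent to Berndt's identity with $+P_{-2k}(z)$ on the right, whereas Theorem \ref{B} has $-P_{-2k}(z)$ (the paper's proof of Theorem \ref{Theorem3} transcribes Berndt's identity with a flipped $P$-sign; with the printed definitions one actually finds $M_{-2k,t}(z)-(tz)^{2k}M_{-2k,t}\left(-\frac{1}{t^2z}\right)=-2P_{-2k}(tz)$). Theorem \ref{B}'s sign is the correct one: at $k=1$, $z=i$ the classical identity $\zeta(3)=\frac{7\pi^3}{180}-2\sum_{n\geq 1}n^{-3}\left(e^{2\pi n}-1\right)^{-1}$ gives $2\mathcal{E}_{-2}(i)=\frac{7\pi^3}{180}-\zeta(3)$, while $P_{-2}(i)=-\frac{7\pi^3}{180}$, matching Theorem \ref{B}; the paper's numerical example $\Psi_{-2}(2i)=\frac{37\pi^3}{1440}-\frac{5\zeta(3)}{8}>0$ likewise confirms the $-P_{-2k}\left(-\frac{1}{z}\right)$ sign in $\Psi_{-2k}$. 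Accordingly, the paper's proof of Corollary \ref{Corollary4} bypasses Theorem \ref{Theorem3} entirely: it obtains $\mathcal{E}_{-2k}\left(-\frac{1}{z}\right)-z^{-2k}\mathcal{E}_{-2k}(z)=\Psi_{-2k}(z)$ directly from Theorem \ref{B} (multiply by $-z^{-2k}$ and use $z^{-2k}P_{-2k}(z)=-P_{-2k}\left(-\frac{1}{z}\right)$), and then divides by $\eta\left(-\frac{1}{z}\right)=\sqrt{-iz}\,\eta(z)$ exactly as you do. To repair your argument, replace the appeal to Theorem \ref{Theorem3}(2) by this direct use of Theorem \ref{B}; the rest of your proposal then goes through unchanged.
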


Thanks to such transformation laws, we are able to employ
the Chowla-Selberg formula (see \cite{ChowlaSelberg, vW}) to obtain a simple extension of the classical fact that weight $k$ algebraic modular forms evaluated at discriminant $D<0$ points $\tau$ are algebraic multiples of the $k$th power of the canonical period $\Omega_{D}$.  To make this precise, let $\overline{\Q}$ denote the algebraic closure of $\Q.$  Suppose $D<0$ is a fundamental discriminant of the imaginary quadratic field $\Q(\sqrt{D})$ with class number $h(D)$.  Furthermore, define 
\begin{equation*}
h'(D) := \begin{cases} \frac{1}{3} & \text{if} \ D=-3 ,\\ \frac{1}{2} & \text{if} \ D=-4 ,\\ h(D) & \text{if} \ D<-4.
\end{cases}
\end{equation*}
With $\chi_{D}(\cdot) := (\frac{D}{\cdot} )$, we can then define the canonical period by
\begin{equation*}
\Omega_{D} := \frac{1}{\sqrt{2 \pi |D|}} \left( \prod_{j=1}^{|D|} \Gamma \left(\frac{j}{|D|} \right)^{\chi_{D}(j)} \right)^{\frac{1}{2h'(D)}}.
\end{equation*}
We can now state our generalization.
\begin{corollary} \label{Corollary5}
If $k\in\N$ and $\tau \in \Q(\sqrt{D}) \cap \mathbb{H}$, where $D<0$ is a fundamental discriminant, then
\begin{equation*}
H_{2k+2, 1}^{*} \left(-\frac{1}{\tau} \right)-\frac{1}{\tau^{2k}\sqrt{-i\tau}}H_{2k+2, 1}^{*} (\tau) \in \overline{\Q} \cdot \frac{\Psi_{-2k}(\tau)}{\sqrt{\Omega_{D}}}.
\end{equation*}
\end{corollary}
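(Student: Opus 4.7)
The plan is to deduce Corollary \ref{Corollary5} directly from Corollary \ref{Corollary4} (2) combined with a Chowla--Selberg type statement for $\eta$ evaluated at CM points. First, I would specialize Corollary \ref{Corollary4} (2) to $z=\tau$, which gives
\begin{equation*}
H_{2k+2,1}^{*}\!\left(-\tfrac{1}{\tau}\right) - \frac{1}{\tau^{2k}\sqrt{-i\tau}}\,H_{2k+2,1}^{*}(\tau) \;=\; \frac{\Psi_{-2k}(\tau)}{\eta\!\left(-\tfrac{1}{\tau}\right)}.
\end{equation*}
So the content of the corollary reduces to the single transcendence/algebraicity statement
\begin{equation*}
\frac{1}{\eta\!\left(-\tfrac{1}{\tau}\right)} \;\in\; \overline{\Q}\cdot \frac{1}{\sqrt{\Omega_{D}}}.
\end{equation*}

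Next, I would observe that $-1/\tau = S\tau$ with $S\in\mathrm{SL}_{2}(\Z)$, so $-1/\tau$ lies in the same imaginary quadratic field $\Q(\sqrt{D})\cap\HH$ and in fact corresponds to an elliptic curve $\mathrm{SL}_{2}(\Z)$-equivalent to the one attached to $\tau$; in particular $-1/\tau$ is a CM point of the same fundamental discriminant $D$, and $\im(-1/\tau) = \im(\tau)/|\tau|^{2}\in\overline{\Q}$. Therefore it suffices to establish that for any CM point $\tau_{0}$ of fundamental discriminant $D$ one has
\begin{equation*}
\eta(\tau_{0}) \;\in\; \overline{\Q}\cdot \sqrt{\Omega_{D}}.
\end{equation*}

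This is exactly the content of the Chowla--Selberg formula in the form used by van Wamelen \cite{vW}: for a CM point of discriminant $D$, the value $\eta(\tau_{0})$ equals an algebraic number times the prescribed period $\sqrt{\Omega_{D}}$ (after absorbing the explicit factor of $\im(\tau_{0})^{1/4}$, which is algebraic for CM $\tau_{0}$). Applying this with $\tau_{0}=-1/\tau$ and taking reciprocals gives $1/\eta(-1/\tau)\in\overline{\Q}\cdot 1/\sqrt{\Omega_{D}}$, and then multiplication by $\Psi_{-2k}(\tau)$ yields the claimed containment.

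The only genuinely nontrivial input is the Chowla--Selberg identification of $\eta$ at CM points with algebraic multiples of $\sqrt{\Omega_{D}}$; the rest is bookkeeping to verify that $-1/\tau$ is a CM point of discriminant $D$ and that all auxiliary factors (powers of $\tau$, $\im(\tau)$, etc.) are algebraic. The main potential obstacle is ensuring that the normalization of $\Omega_{D}$ used in the statement matches the normalization in \cite{vW} precisely, but this amounts to comparing constants and powers of $\pi$ and $|D|$, which can be absorbed into the $\overline{\Q}$-factor.
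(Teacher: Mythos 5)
Your proposal is correct and takes essentially the same route as the paper: specialize Corollary \ref{Corollary4} (2) at $z=\tau$ and invoke the Chowla--Selberg theorem (as in \cite{ChowlaSelberg, vW}) to get $\eta\left(-\frac{1}{\tau}\right) \in \overline{\Q}\cdot\sqrt{\Omega_{D}}$, hence the claimed containment. The additional bookkeeping you supply---checking that $-\frac{1}{\tau}$ is a CM point of the same discriminant $D$ and that the auxiliary factors are algebraic---is exactly what the paper's two-line proof leaves implicit.
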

\begin{rmk}
The second author obtained the $k=0$ extension of Corollary~\ref{Corollary5} in \cite{K}.
As noted in \cite{K}, these results can be extended to $t >1$.
\end{rmk}

Theorems 1.2 and 1.3, and Corollaries 1.4 and 1.5 above pertain to the $t$-hook functions $f_{a,t}$, where $a\geq 2$ is even. There is a theory for negative odd $a$.
These cases give rise to quantum modular forms.  Introduced by Zagier in \cite{ZQ}, a weight $k$ \textit{quantum modular form} is vaguely defined as a function $f: \Q \setminus S \to \C$, for some finite set $S$, where 
\begin{equation*}
h_{f,\gamma}(x) := f(x) - (cx + d)^{-k} f \footnotesize{\left(\frac{ax + b}{cx +d} \right)},
\end{equation*} 
with $\gamma = \left(\begin{smallmatrix} a & b \\ c & d \end{smallmatrix}\right) \in \SL_{2}(\Z),$ is ``better behaved" analytically than $f$.  Recently, Zagier \cite{ZV} defined the notion of a  weight $k$ \textit{holomorphic quantum modular form.} These are holomorphic functions $f: \mathbb{H} \to \C,$ where the ``better behaved" condition means that $h_{f, \gamma}(z)$ is holomorphic on a larger domain than $\mathbb{H}$.

By applying mutatis mutandis a method introduced by Zagier in \cite{ZV}, who considered $\mathcal{E}_{3}(z),$ we have the following infinite family of holomorphic quantum modular forms.

\begin{theorem}\label{Theorem6}
If $a\leq -1$ is odd, then the following are true.

\begin{enumerate}[leftmargin=*]
	\item[\rm (1)]  We have that $\langle f_{a,t} \rangle_{q}$ is a holomorphic weight $2-a$ quantum modular form.  
In particular, we have the modular transformations
\begin{displaymath}
\begin{split}
\mathcal{E}_{2-a}(z) - \mathcal{E}_{2-a}(z+1) &= 0, \\
\mathcal{E}_{2-a}(z) - z^{a-2} \mathcal{E}_{2-a} \left(-\frac{1}{z} \right) &=\frac{1}{2 \pi} \int_{\re(s) = 1 - \frac{a}{2}} \frac{\Gamma(s) \zeta(s) \zeta(s+a-1)}{(2 \pi)^{s} \sin \left( \frac{\pi s}{2} \right)}z^{-s} ds \\
&= 2 \sideset{}{^{'}}\sum_{m,n \geq 0}
\frac{1}{(mz+n)^{2-a}},
\end{split}
\end{displaymath}
where the $'$ denotes that the terms where $m$ or $n$ (but not both) equal zero are weighted by $\frac{1}{2}$.

	\item[\rm (2)] As $t \to 0^{+},$ we have the asymptotic expansion
\begin{equation*}
\mathcal{E}_{2-a} \left( \frac{it}{2 \pi} \right) \sim \frac{\Gamma(2-a) \zeta(2-a)}{t^{2-a}} + \frac{\zeta(a)}{t} + \sum_{n = 0}^\infty \frac{B_{n+1}}{n+1} \frac{B_{n+2-a}}{n+2-a} \frac{(-t)^n}{n!}.
\end{equation*}
\end{enumerate}
\end{theorem}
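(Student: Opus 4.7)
The plan is to follow Zagier's method from~\cite{ZV}, representing $\mathcal{E}_{2-a}(z)$ as a Mellin--Barnes integral and then performing contour shifts to extract both the modular transformations in~(1) and the asymptotic expansion in~(2). By Theorem~\ref{Theorem1} we have $\langle f_{a,t}\rangle_q=\mathcal{E}_{2-a}(tz)$, so it suffices to prove the claims directly for $\mathcal{E}_{2-a}$. Combining the Mellin identity $e^{-w}=\frac{1}{2\pi i}\int_{(c)}\Gamma(s)w^{-s}\,ds$ (applied to $w=-2\pi inz$, using the principal branch on $\mathbb{H}$) with the Dirichlet series $\sum_{n\ge 1}\sigma_{1-a}(n)n^{-s}=\zeta(s)\zeta(s+a-1)$ gives, for $c>\max(1,2-a)$,
$$
\mathcal{E}_{2-a}(z)=\frac{1}{2\pi i}\int_{(c)}\Gamma(s)\zeta(s)\zeta(s+a-1)(-2\pi iz)^{-s}\,ds.
$$
The integrand has only simple poles, located at $s=2-a$ (from $\zeta(s+a-1)$), $s=1$ (from $\zeta(s)$), and $s=-n$ for $n\ge 0$ (from $\Gamma$).

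\emph{Proof of Part~(2).} Specialize to $z=it/(2\pi)$, so $(-2\pi iz)^{-s}=t^{-s}$, and shift the contour leftward to $-\infty$. The residues at $s=2-a$ and $s=1$ yield the two main terms $\Gamma(2-a)\zeta(2-a)t^{a-2}$ and $\zeta(a)t^{-1}$. The residue at $s=-n$, computed from $\operatorname{Res}_{s=-n}\Gamma(s)=(-1)^n/n!$ together with $\zeta(-m)=-B_{m+1}/(m+1)$ applied to both zeta factors, assembles into the Bernoulli term $\frac{B_{n+1}}{n+1}\cdot\frac{B_{n+2-a}}{n+2-a}\cdot\frac{(-t)^n}{n!}$. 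Standard Stirling bounds on $\Gamma$ and polynomial convexity bounds for $\zeta$ on vertical lines then show that the successive tail integrals produce a genuine asymptotic remainder as $t\to 0^+$.

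\emph{Proof of Part~(1).} Invariance under $z\mapsto z+1$ is immediate from $q=e^{2\pi iz}$. For the inversion, the functional equation of $\zeta$ combined with $\Gamma(s)\Gamma(1-s)=\pi/\sin(\pi s)$ gives
$$
\frac{\Gamma(s)}{(2\pi)^s\sin(\pi s/2)}=\frac{\zeta(1-s)}{\sin(\pi s)\,\zeta(s)},
$$
so that the target integrand is symmetric (up to $z^{-s}\leftrightarrow z^{s+a-2}$) under $s\mapsto 2-a-s$. Using the branch identity $(-1/z)^{-s}=z^s e^{-i\pi s}$ on $\mathbb{H}$, the Mellin--Barnes formula also provides a representation of $z^{a-2}\mathcal{E}_{2-a}(-1/z)$; after substituting $s\mapsto 2-a-s$ in that integral and invoking the functional equations of both zeta factors, moving both contours to $\re(s)=1-a/2$ and subtracting collapses the exponential factors into the $1/\sin(\pi s/2)$ form and yields exactly the stated Mellin integral. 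For the second equality, apply the Lipschitz formula $\sum_{n\in\Z}(mz+n)^{a-2}=\frac{(-2\pi i)^{2-a}}{\Gamma(2-a)}\sum_{k\ge 1}k^{1-a}e^{2\pi imkz}$ for each $m\ge 1$, combine with the odd antisymmetry $(-1)^{2-a}=-1$ to fold the $m<0$ or $n<0$ orthants, and recognize the resulting Lambert series as $\mathcal{E}_{2-a}(z)$; the $\zeta(2-a)$ boundary contributions match the residues collected along the contour shift.

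\emph{Principal obstacle.} The crux is the bookkeeping in Part~(1): tracking the branches of $(-2\pi iz)^{-s}$ and $(-1/z)^{-s}$ on $\mathbb{H}$ so the two Mellin integrals combine with the correct factor $1/\sin(\pi s/2)$, and then reconciling this integral with the only--conditionally--convergent Eisenstein sum, where the odd weight $2-a\ge 3$ forces use of Eisenstein's summation convention. The contour shifts in Part~(2) are routine once the Stirling and convexity estimates are in place.
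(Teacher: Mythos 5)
Your proposal follows essentially the same route as the paper: part (2) is precisely the paper's Lawrence--Zagier contour-shift argument (Mellin--Barnes representation, residues at $s=2-a$, $s=1$, and $s=-n$), and part (1) is the paper's Lewis--Zagier computation in mirror image --- the paper Mellin-transforms the period function $\psi_{2-a}$ along $\pm iy$ and inverts on $\re(s)=1-\frac{a}{2}$, while you equivalently combine two Mellin--Barnes representations via the functional equation of $\zeta$, and the paper obtains the lattice-sum identity by Zagier's $90^\circ$-rotation of the half-lattice $\Lambda^{+}$, which amounts to your Lipschitz-formula folding. One small correction to your ``principal obstacle'' paragraph: since $2-a\geq 3$, the sum $\sum_{m,n\geq 0}'(mz+n)^{a-2}$ converges absolutely and no Eisenstein summation convention is needed; the genuine care points are the ones you correctly flag, namely the branch bookkeeping for $(-2\pi iz)^{-s}$ and the residue crossed at $s=2-a$ when moving the contours, which the paper sidesteps by transforming $\psi_{2-a}$ directly (the $\sin(\pi s/2)$ denominator absorbing the zeta poles).
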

\begin{rmk}
Part (2) is already known by work of Zagier. To be more precise he showed (2) in \cite{Za}, using the Euler--Maclaurin summation formula.  However, for the readers convenience, we include a different (and instructive) proof here, which is also an adaptation of an argument of Zagier.
\end{rmk}
\begin{rmk}
For $a=1,$ similar results hold. Namely, we have
\begin{equation*}
\mathcal{E}_{1} \left( \frac{it}{2 \pi} \right) \sim  \frac{2 \gamma}{t} + \sum_{n = 0}^\infty \frac{B_{n+1}^2}{(n+1)^2} \frac{(-t)^n}{n!},
\end{equation*}
where $\gamma$ is the Euler-Mascheroni constant.  One can also use similar methods to find an asymptotic expansion for $\mathcal{E}_{2-a} ( \alpha + \frac{it}{2 \pi} )$ with $\alpha \in \Q$.
\end{rmk}

\begin{rmk}
This new kind of quantum modularity was also noted by Bettin and Conrey in \cite{BC} where they computed $\mathcal{E}_{k}(z) - z^{-k} \mathcal{E}_{k} (-\frac{1}{z} )$ for any $k \in \C$.  Folsom has recently extended their work in \cite{AF}, where it was shown that a new family of ``twisted Eisenstein series'' are holomorphic quantum modular forms, which were then used to show that certain cotangent-zeta sums are quantum modular forms in the original sense.  Zagier's presentation in \cite{ZV} shows that for any $\gamma = \left(\begin{smallmatrix} a & b \\ c & d \end{smallmatrix}\right) \in \SL_{2}(\Z)$ that $h_{\mathcal{E}_{k}, \gamma}(z)$ extends to a holomorphic function on the cut plane
\begin{equation*}
\C_{\gamma} := \begin{cases} \C \setminus \left(-\infty, -\frac{d}{c}\right) & c>0, \\
\C \setminus\left(-\frac{d}{c}, \infty\right) & c<0. \end{cases} \end{equation*}
\end{rmk}

In Section 2 we recall essential $q$-identity preliminaries, and in Section 3 we prove the theorems.
In the last section we offer numerical examples of these results.

\section*{Acknowledgements} \noindent The authors thank Amanda Folsom and Wei-Lun Tsai for their comments on preliminary versions of this paper. Moreover we thank the referees for helpful comments.

\section{Nuts and Bolts}\label{NutsAndBolts}
We recall work of Han and Ji that is integral to the proof of Theorem \ref{Theorem1}.
\begin{theorem}[Theorem 7.5 of \cite{Han2}] \label{Han2}
For an $k \in \C$ and positive integer $t$ we have
\begin{equation*}
t^{k-1}\sum_{\lambda \in \mathcal{P}} q^{|\lambda|} x^{|\mathcal{H}_{t}(\lambda)|}  \sum_{h \in \mathcal{H}_{t}(\lambda)} \frac{1}{h^{k}} = \prod_{n = 1}^\infty \frac{(1-q^{tn})^{t}}{(1-x^{n}q^{tn})^{t} (1-q^n)} \sum_{n = 1}^\infty \frac{x^{n} q^{tn}}{n^{k-1}(1-x^{n} q^{tn})}.
\end{equation*}
\end{theorem}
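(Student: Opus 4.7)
The plan is to reduce the identity to its $t = 1$ case via the classical $t$-core/$t$-quotient bijection, and then to establish the reduced single-variable identity by differentiating a Nekrasov-Okounkov type generating function.

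First I would invoke the bijection $\lambda \leftrightarrow (\mu, (\lambda^{(0)}, \ldots, \lambda^{(t-1)}))$, where $\mu$ is the $t$-core and $(\lambda^{(0)}, \ldots, \lambda^{(t-1)})$ is the $t$-quotient, using the two facts that $|\lambda| = |\mu| + t\sum_i |\lambda^{(i)}|$ and that the multiset $\mathcal{H}_t(\lambda)$ is in bijection with $\bigsqcup_i \{t h : h \in \mathcal{H}(\lambda^{(i)})\}$, so that $|\mathcal{H}_t(\lambda)| = \sum_i |\lambda^{(i)}|$ and $\sum_{h \in \mathcal{H}_t(\lambda)} h^{-k} = t^{-k}\sum_i \sum_{h \in \mathcal{H}(\lambda^{(i)})} h^{-k}$. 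Substituting into the LHS, the factor $t^{k-1}\cdot t^{-k}$ combines with the $t$ identical contributions (by symmetry among the components) to cancel, the $t$-core sum produces Macdonald's identity $\prod_{n \geq 1}(1-q^{tn})^t/(1-q^n)$, and the $t-1$ passive components each contribute $\prod_n 1/(1-x^n q^{tn})$. Writing $Q := xq^t$, the identity collapses to the $t = 1$ statement
\begin{equation*}
\sum_{\lambda \in \mathcal{P}} Q^{|\lambda|} \sum_{h \in \mathcal{H}(\lambda)} \frac{1}{h^k} = \prod_{n \geq 1} \frac{1}{1-Q^n} \cdot \sum_{n \geq 1} \frac{Q^n}{n^{k-1}(1-Q^n)}.
\end{equation*}

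Dividing both sides by $\sum_\lambda Q^{|\lambda|}$ and expanding the Lambert series as $\sum_N \sigma_{1-k}(N) Q^N$, the remaining task is the $q$-bracket identity $\langle \sum_{h \in \mathcal{H}(\lambda)} h^{-k}\rangle_Q = \sum_N \sigma_{1-k}(N)Q^N$. For $k = 2$ this is an immediate consequence of the Nekrasov-Okounkov formula $q^{\alpha/24}\langle D_\alpha\rangle_q = \eta(z)^\alpha$: taking the logarithm and differentiating in $\alpha$ at $\alpha = 0$ matches $-\langle \sum_h h^{-2}\rangle_q$ with $-\sum_N \sigma_{-1}(N) q^N$. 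For general $k$ one can iterate this trick with higher $\alpha$-Taylor coefficients of $D_\alpha$, or alternatively use the Mellin representation $h^{-k} = \Gamma(k)^{-1}\int_0^\infty u^{k-1}e^{-hu}\,du$ together with a closed form for $\langle \sum_h e^{-hu}\rangle_Q$ (a marked hook exponential generating function of the type produced by Han--Ji's machinery), and then extend to all $k \in \C$ by analytic continuation coefficient by coefficient in $Q$.

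The principal obstacle is the last step: identifying the correct master generating function from which all moments $\langle \sum_h h^{-k}\rangle_Q$ can be extracted cleanly. The $t$-quotient reduction and the Lambert series expansion are essentially bookkeeping once the $t$-hook bijection is in hand, but pinning down the right one-parameter family of Nekrasov-Okounkov type identities (or, equivalently, the right closed evaluation of the exponential hook sum) is where the analytic content sits, and where I would expect to have to draw most heavily on the Han--Ji toolbox.
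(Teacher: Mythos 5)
The paper offers no proof of this statement at all: it is imported verbatim as Theorem 7.5 of Han--Ji \cite{Han2} and used as a black box (the paper's only manipulation of it is the specialization $x=1$ in the proof of Theorem 1.1). So the relevant comparison is with Han--Ji's own argument, which, like yours, runs through the Littlewood $t$-core/$t$-quotient decomposition. Your reduction step is correct and complete: the three facts you invoke ($|\lambda|=|\mu|+t\sum_i|\lambda^{(i)}|$, the bijection $\mathcal{H}_t(\lambda)\leftrightarrow\bigsqcup_i\{th:h\in\mathcal{H}(\lambda^{(i)})\}$, and the $t$-core generating function $\prod_n(1-q^{tn})^t/(1-q^n)$) do collapse the two-variable identity to the stated $t=1$ identity in $Q=xq^t$, with the prefactor $t^{k-1}\cdot t^{-k}\cdot t=1$ cancelling exactly as you say, and with the Euler factor $\prod_n(1-Q^n)^{-(t-1)}$ from the passive components combining with the one hidden in the $t=1$ identity to give the exponent $t$ on the right-hand side.

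The genuine gap is the $t=1$ case itself, which carries all the combinatorial content, and neither of your proposed mechanisms delivers it. Differentiating the Nekrasov--Okounkov formula in $\alpha$ at $\alpha=0$ proves exactly $k=2$ and nothing else: the higher Taylor coefficients of $D_\alpha(\lambda)=\prod_h\left(1-\alpha/h^2\right)$ are elementary symmetric functions of the multiset $\{h^{-2}\}$, i.e.\ sums such as $\sum_{h_1<h_2}h_1^{-2}h_2^{-2}$, not power sums $\sum_h h^{-k}$, so no iteration of that trick reaches $k\neq 2$. Your alternative route presupposes ``a closed form for $\langle\sum_h e^{-hu}\rangle_Q$,'' but expanding in $u$ (or undoing the Mellin transform) shows this closed form is equivalent to the identity being proved; deferring it to ``the Han--Ji toolbox'' is circular, as you half-concede. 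What actually closes the gap is the single lemma that, with $a_n(\lambda)$ denoting the number of cells of $\lambda$ with hook length exactly $n$, one has $\sum_\lambda a_n(\lambda)Q^{|\lambda|}=\frac{nQ^n}{1-Q^n}\prod_m(1-Q^m)^{-1}$; summing $n^{-k}a_n(\lambda)$ over $n$ then yields the $t=1$ identity for every $k\in\C$ simultaneously, with no analytic continuation needed since $\sum_{h\in\mathcal{H}(\lambda)}h^{-k}$ is a finite sum for each $\lambda$. The lemma itself has a short abacus proof: cells of hook length $n$ biject with removable $n$-rim hooks; adding or removing an $n$-rim hook is a bead move by $n$ on the abacus, and on each of the $n$ runners the addable moves outnumber the removable ones by exactly one, so with $A_n(Q):=\sum_\lambda \operatorname{add}_n(\lambda)Q^{|\lambda|}$ and $R_n(Q):=\sum_\lambda a_n(\lambda)Q^{|\lambda|}$ one gets $R_n=Q^nA_n$ (add/remove bijection, shifting the size by $n$) and $A_n-R_n=n\prod_m(1-Q^m)^{-1}$, whence the claimed evaluation. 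Without some such argument, your proposal proves the reduction but not the theorem.
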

We now recall a theorem of Berndt which is used to prove Theorem \ref{Theorem3}.
\begin{theorem}[Theorem 2.2 of \cite{Berndt}] \label{B}
For $z \in \mathbb{H}$ and $k\in\N$ we have
\begin{equation*}
\mathcal{E}_{-2k}(z) - z^{2k} \mathcal{E}_{-2k} \left( -\frac{1}{z} \right) = -\frac{1}{2} \left(1- z^{2k} \right) \zeta(2k+1) - P_{-2k}(z).
\end{equation*}
\end{theorem}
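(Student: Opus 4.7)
The plan is to deploy a Mellin--Barnes contour argument in the style of Riemann. Setting $z=it/(2\pi)$ with $t>0$ and rewriting $\mathcal{E}_{-2k}(z)$ as the Lambert series $\sum_{d\geq 1} d^{-(2k+1)}(e^{dt}-1)^{-1}$, I would first invoke the Mellin representation $\frac{1}{e^x-1} = \frac{1}{2\pi i}\int_{(c)}\Gamma(s)\zeta(s)x^{-s}\,ds$ (valid for $x>0$, $c>1$) and interchange summation and integration to produce
\begin{equation*}
\mathcal{E}_{-2k}\!\left(\frac{it}{2\pi}\right) = \frac{1}{2\pi i}\int_{(c)} \Gamma(s)\,\zeta(s)\,\zeta(s+2k+1)\, t^{-s}\,ds, \qquad c>2k+1.
\end{equation*}

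Next, I would shift the contour to $\re(s)=-2k-1-\varepsilon$, collecting residues at the simple poles $s=1$ (from $\zeta(s)$), $s=0,-1,-3,\ldots,-(2k+1)$ (from $\Gamma(s)$; the even negative $\Gamma$-poles other than $s=-2k$ are cancelled by the trivial zeros of $\zeta(s)$), and the distinguished point $s=-2k$, where $\Gamma(s)$ and $\zeta(s+2k+1)$ each have a simple pole while $\zeta(s)$ has a simple zero, producing a simple pole with residue proportional to $\zeta'(-2k)$. On the shifted contour, the substitution $s=-2k-u$ together with Riemann's functional equation for $\zeta$ and the reflection formula $\Gamma(s)\Gamma(1-s)=\pi/\sin(\pi s)$ rearrange the integrand into the defining Mellin representation of $\mathcal{E}_{-2k}(-1/z)$ multiplied by $(-1)^k(2\pi)^{-2k}t^{2k}=z^{2k}$; hence the shifted integral contributes precisely $z^{2k}\mathcal{E}_{-2k}(-1/z)$.

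The residues are then identified with the claimed right-hand side. The residue at $s=0$ equals $-\tfrac{1}{2}\zeta(2k+1)$; using the classical identity $\zeta'(-2k)=\frac{(-1)^k(2k)!}{2(2\pi)^{2k}}\zeta(2k+1)$, the distinguished residue at $s=-2k$ evaluates to $+\tfrac{1}{2}z^{2k}\zeta(2k+1)$, so these two together produce $-\tfrac{1}{2}(1-z^{2k})\zeta(2k+1)$. The residues at the remaining odd poles $s=1-2m$ for $m=0,1,\ldots,k+1$, after substituting $\zeta(2n)=(-1)^{n+1}(2\pi)^{2n}B_{2n}/(2(2n)!)$ and $\zeta(1-2m)=-B_{2m}/(2m)$ and converting $t^{2m-1}$ back to $-(2\pi iz)^{2m-1}$, collapse cleanly into $-P_{-2k}(z)$, yielding Berndt's identity.

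The main obstacle is the bookkeeping: aligning the signs and powers of $2\pi i$ so that the functional-equation manipulation produces $+z^{2k}$ rather than $-z^{2k}$, and carefully Laurent-expanding the interacting zero and double pole at $s=-2k$ rather than applying a naive residue formula. The analytic justification of the contour shift is routine via Stirling for $\Gamma$ and the convexity bound for $\zeta$ along vertical lines.
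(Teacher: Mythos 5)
The paper does not actually prove this statement: it is imported verbatim as Theorem 2.2 of Berndt's 1977 Rocky Mountain paper, so there is no internal proof to compare against. Your Mellin--Barnes argument is a correct, self-contained proof, and indeed it is the classical route to this identity (which is equivalent to Ramanujan's formula for $\zeta(2k+1)$); Berndt's cited derivation instead specializes his general transformation formulae for Eisenstein-type series, themselves obtained by the same Mellin/residue toolkit, so the two approaches are close cousins, with yours having the advantage of being short and self-contained for exactly this family. I checked the details and they work out: with $z=\frac{it}{2\pi}$, i.e.\ $t=-2\pi i z$, the representation $\mathcal{E}_{-2k}\left(\frac{it}{2\pi}\right)=\frac{1}{2\pi i}\int_{(c)}\Gamma(s)\zeta(s)\zeta(s+2k+1)t^{-s}\,ds$ is valid already for $c>1$; shifting to $\re(s)=-2k-1-\varepsilon$ crosses exactly the poles you list (the trivial zeros of $\zeta(s)$ kill the even $\Gamma$-poles except $s=-2k$, where the Laurent expansion $\Gamma(s)\sim\frac{1}{(2k)!\,\varepsilon}$, $\zeta(s+2k+1)\sim\frac1\varepsilon$, $\zeta(s)\sim\zeta'(-2k)\varepsilon$ yields a net simple pole with residue $\frac{\zeta'(-2k)}{(2k)!}t^{2k}$, which by $\zeta'(-2k)=\frac{(-1)^k(2k)!}{2(2\pi)^{2k}}\zeta(2k+1)$ and $t^{2k}=(-1)^k(2\pi)^{2k}z^{2k}$ equals $\frac12 z^{2k}\zeta(2k+1)$ as you claim); the odd residues at $s=1-2m$, $m=0,1,\dots,k+1$ (including the $\zeta(s)$-pole at $s=1$ as the $m=0$ term) match $-P_{-2k}(z)$ term by term; and on the shifted line the substitution $s=-2k-w$ together with the functional equation and reflection formula gives $\Gamma(-2k-w)\zeta(-2k-w)\zeta(1-w)=(-1)^k(2\pi)^{-2k-2w}\Gamma(w)\zeta(w)\zeta(w+2k+1)$, with prefactor $(-1)^k t^{2k}(2\pi)^{-2k}=z^{2k}$ and the new variable sitting at $\re(w)=1+\varepsilon$, inside the domain of absolute convergence of the Mellin representation of $\mathcal{E}_{-2k}$ at argument $\frac{4\pi^2}{t}$ --- so the choice of shift is exactly right. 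The only step you should make explicit is the last one: the contour argument proves the identity for $z$ on the positive imaginary axis only, and one concludes for all $z\in\mathbb{H}$ by noting that both sides are holomorphic there and invoking the identity theorem; this is routine but should be said.
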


\section{Proofs of the Theorems}

\begin{proof}[Proof of Theorem \ref{Theorem1}]
We set $x=1$ in Theorem \ref{Han2} to obtain
\begin{equation*}
t^{k-1} \sum_{\lambda \in \mathcal{P}} q^{|\lambda|}  \sum_{h \in \mathcal{H}_{t}(\lambda)} \frac{1}{h^k} = \prod_{n = 1}^\infty \frac{1}{1-q^n} \sum_{n = 1}^\infty \frac{q^{tn}}{n^{k-1} (1-q^{tn})}.
\end{equation*}
The statement is then immediate from the definition of the $q$-bracket.
\end{proof}

\begin{proof}[Proof of Theorem \ref{Theorem2}]
	(1)
Note that $E_{2}^{*}(z)$ is a limit of an Eisenstein series
	\begin{align*}
	E_{2}^{*}(z)&=\lim_{s \to 0} \sum_{M \in \Gamma_{\infty} \setminus \SL_{2}(\Z)}y^s\big|_{2}M(z).
	\end{align*}
	For $\operatorname{Re}(s)>1,$ we recall the Eisenstein series
	\begin{equation*}
	E(z,s):=\frac 12\sum_{\gcd(c,d)=1}\frac{y^s}{|cz+d|^{2s}}=\sum_{M\in\Gamma_\infty\setminus\SL_2(\Z)} y^s\big|_0M(z).
	\end{equation*}
	Then $E(z,s)$ has a meromorphic continuation to the whole $s$-plane again denoted by $E(z,s)$ with a simple pole with residue $\frac 3\pi$ at $s=1$. We now define
	\begin{equation*}
	\mathbb{E}(z,s):=E(z,s)-\frac{3}{\pi(s-1)} \ \ \ 
	{\text {\rm and}}\ \ \  \widetilde{\mathbb{E}}_{0}(z):=\lim_{s\to 1} \mathbb{E}(z,s).
	\end{equation*}
	Below we show that $\widetilde{\mathbb{E}}_0=\mathbb{E}_0$.
	A direct calculation shows that
	\begin{align*}
	D\left(\widetilde{\mathbb{E}}_{0}\right)=-\frac{1}{4\pi} E_{2}^{*}.
	\end{align*}
	
	\noindent Clearly, $\widetilde{\mathbb{E}}_{0}(z)$ has weight zero, since $E(z,s)$ does. Moreover, recalling that
	\begin{align*}
	\Delta_{0}(E(z,s))=s(1-s)E(z,s),
	\end{align*}
	we obtain that
	\begin{align*}
	\Delta_{0}\left(\widetilde{\mathbb{E}}_{0}\right)(z)=-\frac{3}{\pi}.
	\end{align*}
Therefore, the function is sesquiharmonic.\\
	\indent We now compute its Fourier expansion. We have
	\begin{equation*}
	E(z,s)=y^s+\frac{\zeta^{*}(2s-1)}{\zeta^{*}(2s)}y^{1-s}
	+\frac{4\sqrt{y}}{\zeta^*(2s)}\sum_{m=1}^{\infty}m^{s-\frac{1}{2}}\sigma_{1-2s}(m)K_{s-\frac 12}(2\pi my)\cos(2\pi m x),
	\end{equation*}
	where
	\begin{equation*}
	\zeta^{*}(s):=\pi^{-\frac s2}\Gamma\left(\frac s2\right) \zeta(s).
	\end{equation*}
	We need to take $s\to 1$ and subtract $\frac{3}{\pi(s-1)}$ from the constant term.
	First note that in the sum on $m$, we can just plug in $s=1$ to evaluate
	\begin{equation*}
		\frac 6\pi\sum_{m=1}^{\infty}\sigma_{-1}(m)\left(q^m+\overline{q}^{m}\right),
	\end{equation*}
	using
$
	K_{\frac 12}(x)=\sqrt{\frac{\pi}{2x}} e^{-x}.
$ Finally, we directly compute
	\begin{align*}
	\lim_{s\to 1}\left(\frac{\zeta^{*}(2s-1)}{\zeta^{*}(2s)}y^{1-s}-\frac{3}{\pi(s-1)}\right)
	=
	\frac{6\gamma}{\pi}-\frac{6\log(2)}{\pi}-\frac{3\log(y)}{\pi}-\frac{36}{\pi^3}\zeta'(2).
	\end{align*}
	Therefore, we obtain
	\begin{equation*}
	\widetilde{\mathbb{E}}_{0}(z)=y+\frac{6\gamma}{\pi}-\frac{6\log(2)}{\pi}-\frac{3\log(y)}{\pi}-\frac{36}{\pi^3}\zeta'(2)
	+\frac 6\pi\sum_{m=1}^{\infty}\sigma_{-1}(m)\left(q^m+\overline{q}^{m}\right)=\mathbb{E}_0(z).	
\end{equation*}
The linear exponential growth in $i\infty$ follows directly from the Fourier expansion; the other cusps can be treated in a similar manner.
	
\noindent (2) The proof is well-known (for example, see Corollary 6.16 of \cite{BFOR}).
\end{proof}

\begin{proof}[Proof of Theorem \ref{Theorem3}]
By Theorem \ref{Theorem1}, we have 
\begin{equation*}
M_{-2k,t}(z) = \mathcal{E}_{-2k}(tz) - \frac{1}{2} P_{-2k}(tz) + \frac{1}{2} \zeta(2k+1).
\end{equation*}
Letting $z \mapsto tz$ in Theorem \ref{B}, we obtain
\begin{equation*}
\mathcal{E}_{-2k}(tz) - (tz)^{2k} \mathcal{E}_{-2k} \left(-\frac{1}{tz} \right) = \frac{1}{2} \left((tz)^{2k} -1\right) \zeta(2k+1) + P_{-2k}(tz).
\end{equation*}
By a direct calculation, we find that $(tz)^{2k} P_{-2k} (-\frac{1}{tz} ) =-P_{-2k}(tz),$ and thus
\begin{align*}
M_{-2k,t}(z) - &(tz)^{2k} M_{-2k,t} \left(-\frac{1}{t^2 z} \right) \\
&= \mathcal{E}_{-2k}(tz) - (tz)^{2k} \mathcal{E}_{-2k}\left( -\frac{1}{tz} \right) -\frac{1}{2} \left( P_{-2k}(tz) - (tz)^{2k} P_{-2k} \left(-\frac{1}{tz} \right) \right) 
\\&\hspace{130pt}+ \frac{1}{2}\left(1 - (tz)^{2k}\right) \zeta(2k+1) 
\\
&= \frac{1}{2}((tz)^{2k} -1) \zeta(2k+1) + P_{-2k}(tz) - P_{-2k}(tz) + \frac{1}{2}\left(1 - (tz)^{2k}\right) \zeta(2k+1) =0.
\end{align*}
This gives part~(2).

Because $\mathcal{E}_{-2k}(tz)$ is invariant under $z \mapsto z+1$ part (1) follows from computing $P_{-2k}(tz +t) - P_{-2k}(tz)$.
\end{proof}

\begin{proof}[Proof of Corollary \ref{Corollary4}]
We have that $H_{2k+2,1}^{*}(z) = \frac{\mathcal{E}_{-2k}(z)}{\eta(z)},$ and so the corollary follows from the fact that \begin{equation*}
\mathcal{E}_{-2k} \left(-\frac{1}{z} \right) - z^{-2k} \mathcal{E}_{-2k}(z) = \Psi_{-2k}(z),
\end{equation*}
and the transformation properties of the Dedekind eta-function.
\end{proof}
\begin{proof}[Proof of Corollary \ref{Corollary5}]
By the classical Chowla-Selberg Theorem (see \cite{ChowlaSelberg, vW}), we have that
\begin{equation*}
\eta \left(-\frac{1}{\tau} \right) \in \overline{\Q} \cdot \sqrt{\Omega_{D}}.
\end{equation*}
Corollary \ref{Corollary5} is now a consequence of Corollary \ref{Corollary4}.
\end{proof}

\begin{proof}[Proof of Theorem \ref{Theorem6}]
For odd $k \geq 1,$ we define
\begin{equation} \label{Gk}
G_{k}(z) := -\frac{B_{k}}{2k} + \sum_{n =1}^\infty \sigma_{k-1}(n) q^n.
\end{equation}
This is a slight abuse of notation as $G_{k}(z) = \mathcal{E}_{k}(z)$ in these cases (except for $k=1$) because $B_{k}=0$ for $k \geq 3$ and odd.  We let 
\begin{equation*}
\widetilde{G}_{k}(s) := \int_{0}^{\infty} \left( G_{k}(iy) + \frac{B_{k}}{2k} \right) y^{s-1} dy = \frac{\Gamma(s)}{(2 \pi)^s} \zeta(s) \zeta(s-k+1).
\end{equation*} \\
Before we prove the quantum modularity, we address the claimed asymptotic expansions. The idea, which is well-known (for example, see \cite{LZ2} or the proof of Theorem 21.4 in \cite{BFOR}), is to relate the product of zeta functions on the right to the Mellin integral representation involving $G_{k}(z)$.  The following argument follows almost mutatis mutandis as in pages 99-100 of \cite{LZ2}.  To compute these expansions, we write $G_{k} ( \frac{it}{2 \pi} )$ as a contour integral 
\begin{equation*}
\frac{1}{2 \pi i} \int_{C} \Gamma(s) \zeta(s) \zeta(s-k+1) t^{-s} ds,
\end{equation*} 
where the contour encircles the negative imaginary axis.  Moving the contour across the simple poles of $\Gamma(s) \zeta(s) \zeta(s-k+1)$ gives the desired expansions due to the poles at $s=1$ and $k$ for the zeta function factor, and the poles at each nonpositive integer for the gamma function.  In this way we obtain the formula
\begin{equation*}
\mathcal{E}_{k} \left( \frac{it}{2 \pi} \right) \sim \frac{\Gamma(k) \zeta(k)}{t^{k}} + \frac{\zeta(2-k)}{t} + \sum_{n = 0}^\infty \frac{B_{n+1}}{n+1} \frac{B_{n+k}}{n+k} \frac{(-t)^n}{n!}.
\end{equation*} 
Namely, the first two summands correspond to the residues at $s=k$ and $s=1$, while the remaining sums involving Bernoulli numbers correspond the poles arising to the gamma function and the special values of the zeta function at negative integers.

In order to prove Theorem \ref{Theorem6} $(1)$, we adapt the proof of Proposition 10 in \cite{LZ} which pertained to a similar function.  We first note that we can extend $G_{k}(z)$ to $\C \setminus \R$ by defining
\begin{equation*}
G_{k}(z) := \begin{cases} -\frac{B_{k}}{2k} + \sum_{n=1}^{\infty} \sigma_{k-1}(n) q^{n} & \text{if } {\rm{Im}}(z) >0, \\ \frac{B_{k}}{2k} - \sum_{n = 1}^{\infty} \sigma_{k-1}(n) q^{-n} & \text{if } {\rm{Im}}(z) <0.
\end{cases}
\end{equation*}
Then we define the period function
\begin{equation*}
\psi_{k} (z) := G_{k}(z) - z^{-k} G_{k} \left(-\frac{1}{z} \right).
\end{equation*}
The Mellin transforms of $G_{k}$ and $\psi_{k}$ restricted to the positive or negative imaginary axis are given by
\begin{align*}
\widetilde{G}_{k, \pm}(s) &:= \int_{0}^{\infty} \left( G_{k}(\pm iy) \pm \frac{B_{k}}{2k} \right) y^{s-1} dy = \pm \frac{\Gamma(s)}{(2 \pi)^{s}} \zeta(s) \zeta(s-k+1),\\
\widetilde{\psi}_{k, \pm}(s) &:= \int_{0}^{\infty} \psi_{k}(\pm i y) y^{s-1} dy = \int_{0}^{\infty} \left(G_{k}(\pm iy) \pm \frac{i^k}{y^k} G_{k} \left( \pm \frac{i}{y} \right) \right)y^{s-1} dy \\
&= \widetilde{G}_{k, \pm}(s) \pm i^{k} \widetilde{G}_{k, \pm}(k-s) = \left(1 \mp \frac{e^{\frac{\pi i s}{2}} + e^{-\frac{\pi i s}{2}}}{e^{\frac{\pi i s}{2}} - e^{-\frac{\pi is}{2}}} \right) \widetilde{G}_{k, \pm}(s)  \\
&= \frac{i e^{\mp \pi i \frac{s}{2}}\Gamma(s) \zeta(s) \zeta(s-k+1)}{(2 \pi)^{s}\sin \left(\frac{\pi s}{2} \right)}.
\end{align*}
Using the Mellin inversion formula, we obtain
\begin{equation*}
\psi_{k}( \pm iy) = \frac{1}{2 \pi } \int_{{\rm{Re}}(s) = \frac{k}{2}} \frac{\Gamma(s) \zeta(s) \zeta(s-k+1)}{(2 \pi)^{s}\sin \left(\frac{\pi s}{2} \right)} (\pm iy)^{-s} ds
\end{equation*}
 for $y>0$.  As in \cite{LZ}, by analytic continuation from $i \R \setminus \{0 \}$ to $\C \setminus \R,$ we have
\begin{equation*}
\psi_{k}(z) = \frac{1}{2 \pi} \int_{{\rm{Re}}(s) = \frac{k}{2}} \frac{\Gamma(s) \zeta(s) \zeta(s-k+1)}{(2 \pi)^{s}\sin \left(\frac{\pi s}{2} \right)}  z^{-s} ds
\end{equation*}
for $z \in \C \setminus \R$.
We note that the fraction is bounded by a power of $s$ times $e^{-\pi s},$ as $|s| \to \infty$ on vertical strips. Therefore, by writing $z^{-s} = e^{-s \Log(z)},$ we see that the integral converges for $|\Arg(z)| < \pi.$ Therefore, it is holomorphic on the cut plane $\C' := \C \setminus \R^-$ .

Zagier noted in \cite{ZV} that for odd $k \geq 3,$ that we can also write
\begin{equation*}
G_{k}(z) = \sum_{(m,n) \in \Lambda^{+}} \frac{1}{(mz+n)^k},
\end{equation*}
where
\begin{equation*}
\Lambda^{+} := \left\{ (m,n) \in \Z^2 : m>0 \ \text{or} \ m=0 \ \text{and} \ n>0 \right\}.
\end{equation*}
Determining $G_{k} - G_{k} | \gamma$ then boils down to a determination of the action of $\gamma$ on $\Lambda^{+}$.  The case of $\gamma = S=
\left(\begin{smallmatrix} 0&-1\\ 1&0\end{smallmatrix}\right)$ is straightforward as it is merely a 90 degree counterclockwise rotation of $\Lambda^{+}.$ This corresponds to the second and third quadrants instead of the first and second.  The difference then cancels the terms in the second quadrant while summing over the third quadrant is the same as the first quadrant just with a sign change.  Combining these observations then gives
\begin{equation*}
G_{k}(z) - z^{-k} G_{k} \left(-\frac{1}{z} \right) = 2 \sideset{}{^{'}}\sum_{m,n \geq 0}\frac{1}{(mz+n)^k} \in \Hol(\C').
\end{equation*}
The holomorphicity of this expression follows from the fact that poles cannot be  introduced by choosing $z \in \C'$, but if $z \in \R^-$ then $mz+n$ can be arbitrarily small causing the sum to diverge.
\end{proof}

\section{Examples}

\begin{example}
We now illustrate Corollary \ref{Corollary5} using $z = \tau = i$ and $z=\tau=2i$.  Ramanujan proved (see p. 326 of \cite{Berndt1998}) that
\begin{equation*}
\eta\left(\frac i2\right) = 2^{\frac{1}{8}} \cdot \sqrt{\Omega_{-4}} \approx 0.8377.
\end{equation*}
By direct calculation we find
\begin{align*}
\Psi_{-2}(2i) &= \frac{37 \pi^3}{1440} - \frac{5 \zeta(3)}{8} \approx 0.04540, &&
\Psi_{-4}(2i) =  \frac{\pi^5}{576} - \frac{15 \zeta(5)}{32} \approx 0.04522.
\end{align*}
We therefore have
\begin{align*}
\frac{\Psi_{-2}(2i)}{\eta\left(\frac i2\right)} &\approx 0.05420, &&
\frac{\Psi_{-4}(2i)}{\eta\left(\frac i2\right)} \approx 0.05398.
\end{align*}
By direct calculation, we find that
\begin{align*}
H_{4,1}^{*}(2i) &\approx 5.887 \cdot 10^{-6}, \qquad && H_{4,1}^{*}\left(\frac i2\right) \approx 0.05420, \\
H_{6,1}^{*}(2i) & \approx 5.887 \cdot 10^{-6}, \qquad && H_{6,1}^{*} \left(\frac i2\right)\approx 0.05398.
\end{align*}
Combining these calculations gives us
\begin{align*}
H_{4,1}^{*}\left(\frac i2\right) + \frac{1}{2^{\frac{5}{2}}} H_{4,1}^{*}(2i) = \frac{\Psi_{-2}(2i)}{\eta\left(\frac i2\right)} = \frac{1}{2^{\frac{1}{8}}} \cdot \frac{\Psi_{-2}(2i)}{\sqrt{\Omega_{-4}}},
\\
H_{6,1}^{*}\left(\frac i2\right) - \frac{1}{2^{\frac{9}{2}}} H_{6,1}^{*}(2i) = \frac{\Psi_{-4}(2i)}{\eta\left(\frac i2\right)} = \frac{1}{2^{\frac{1}{8}}} \cdot \frac{\Psi_{-4}(2i)}{\sqrt{\Omega_{-4}}}
\end{align*}
so the algebraic factor is $2^{-\frac{1}{8}}$ in both cases.

\end{example}

\begin{example}
We now illustrate Theorems \ref{Theorem1} and \ref{Theorem6}. 
We recall that Theorem~\ref{Theorem1} implies that
$$
\langle f_{a,1}\rangle_q=\prod_{n=1}^{\infty} (1-q^n)\cdot H_{a,1}(z)=\mathcal{E}_{2-a}(z).
$$
Therefore, if $k=2-a,$ where $a\leq -1$ is odd, then Theorem~\ref{Theorem6} (2) asserts that the functions
\begin{equation*}
\widehat{G}_{k}(t) := \sum_{n =1}^\infty \sigma_{k-1}(n) e^{-nt}=\mathcal{E}_{2-a}\left(\frac{it}{2\pi}\right),
\end{equation*}
and
\begin{equation*}
\widetilde{G}_{k}(t) := \frac{\Gamma(k) \zeta(k)}{t^k} + \frac{\zeta(2-k)}{t} + \sum_{n =0}^\infty \frac{B_{n+1}}{n+1} \frac{B_{n+k}}{n+k} \frac{(-t)^{n}}{n!}.
\end{equation*}
have the same asymptotic behavior as $t \to 0^+.$
The table below illustrates this when $a=-1.$

\smallskip
\begin{center}
\def\arraystretch{1.5}
  \begin{tabular}{ | c | c | c | c | }
    \hline
    $t$ & $\widehat{G}_{3}(t)$ & $\widetilde{G}_{3}(t)$ & $\widehat{G}_{3}(t)/\widetilde{G}_{3}(t)$ \\ \hline
    $2$ & $\approx 0.2602861623$ & $\approx 0.2602864321$ & $\approx 0.9999989634$  \\ 
    $1.5$ & $\approx 0.6578359053$ & $\approx 0.6578359052$ & $\approx 0.9999999998$ \\ 
    $1$ & $\approx 2.3214805734$ & $\approx 2.3214805734$ & $\approx 1.0000000000$ \\ 
    $0.5$ & $\approx 19.0665916994$ & $\approx 19.0665916994$ & $\approx 1.0000000000$ \\ 
    $0.1$ & $\approx 2403.2805424358$ & $\approx 2403.2805424358$ & $\approx 1.0000000000$ \\ 
   
    \hline
  \end{tabular}
\end{center}
\end{example}

\end{document}